\newtheorem{theorem}{Theorem}
\newtheorem{obs}{Observation}
\newtheorem{corollary}{Corollary}
\newtheorem*{obs*}{Observation}
\newtheorem{conj}{Conjecture}
\newtheorem*{def*}{Definition}
\newtheorem*{lemma*}{Lemma}
\def \nmr {\begin{enumerate}}
\def \enmr {\end{enumerate}}
\def \tmz {\begin{itemize}}
\def \etmz {\end{itemize}}
\newcommand{\Proof}{\noindent\textbf{Proof. }}
\newcommand{\smallqed}{{\tiny ($\Box$)}}
\newcommand{\w}{{\rm s}}
\title{Domination number of graphs with minimum degree five\thanks{Research supported by the Slovenian Research Agency under the project N1-0108}}
\begin{document}

\author{Csilla Bujt\'as}
\date{\empty}
\maketitle

\vspace{-5ex}

 \begin{center}
 	\small Faculty of Mathematics and Physics, University of Ljubljana\\
 	Ljubljana, Slovenia\\
 	\texttt{csilla.bujtas@fmf.uni-lj.si}
 \end{center}

\begin{abstract}
We prove that for every graph $G$ on $n$ vertices and with minimum degree five, the domination number $\gamma(G)$ cannot exceed $n/3$. The proof combines an algorithmic approach and the discharging method. Using the same technique, we provide a shorter proof for the known upper bound $4n/11$ on the domination number of graphs of minimum degree four.
\end{abstract}

\noindent {\small \textbf{Keywords:} Dominating set, domination number, discharging method.} \\
\noindent {\small \textbf{AMS subject classification:}  05C69}

\section{Introduction}
In this paper we study the minimum dominating sets in graphs of given order $n$ and minimum degree $\delta$. For the case of $\delta= 5$, we improve the previous best upper bound $0.344\,n$ by proving that the domination number $\gamma$ is at most $n/3$. For graphs of $\delta=4$, the relation $\gamma\le 4n/11$ was proved by Sohn and  Xudong \cite{sohn-2009} in 2009. Using a different approach, we provide a simpler proof for this theorem.

\paragraph{Standard definitions.}
In a simple graph $G$, the vertex set is denoted by $V(G)$ and the edge set by $E(G)$. For a vertex $v \in V(G)$, its \emph{closed neighborhood} $N[v]$ contains $v$ and its neighbors. For a set $S \subseteq V(G)$, we use the analogous notation $N[S]= \bigcup_{v \in S} N[v]$. The \emph{degree} of a vertex $v$ is denoted by $d(v)$, while $\delta(G)$ and $\Delta(G)$, respectively, stand for the \emph{minimum and maximum vertex degree} in $G$. A set $D \subseteq V(G)$ is a \emph{dominating set}  if $N[D]=V(G)$. The minimum cardinality of a dominating set is the \emph{domination number} $\gamma(G)$ of the graph. An earlier general survey on domination theory is~\cite{haynes-1998}, while two new directions were initiated recently  in~\cite{bresar-2010} and~\cite{bresar-2014}.

\paragraph{General upper bounds on $\gamma(G)$ in terms of the order and minimum degree.}
The first general upper bound on $\gamma(G)$ in terms of the order $n$ and the minimum degree $\delta$ was given by Arnautov~\cite{arnautov-1974} and, independently, by Payan~\cite{payan-1975}:
\begin{equation}
\label{eq:arnautov}
\gamma(G) \le \frac{n}{\delta +1}\sum_{j=1}^{\delta+1}\frac{1}{j}\,.
\end{equation}
We remark that a bit stronger general results were later published by Clark et al.~\cite{CSSF} and Bir\'o et al.~\cite{BCDS}. On the other hand, already~\eqref{eq:arnautov} implies the upper bound
\begin{equation}
\label{eq:alon}
\gamma(G) \le n \left( \frac{1 + \ln (\delta + 1)}{\delta +1}\right)\, .
\end{equation}
It was proved by Alon~\cite{alon-1990} that~\eqref{eq:alon} is asymptotically sharp when $\delta \to \infty$.

\paragraph{Upper bounds for graphs of small minimum degrees.} There are several ways to show that $\gamma(G) \le n/2$ holds if $\delta(G) = 1$ (see~\cite{ore-1962} for the first proof). Blank~\cite{blank-1973}, and later independently McCuaig and Shepherd~\cite{mccuaig-1989} proved that $\gamma(G)\le 2n/5$ is true if $G$ is connected, $\delta(G)= 2$, and $n \ge 8$.\footnote{There are seven small graphs, the  cycle $C_4$  and six graphs with $n=7$ and  $\delta=2$, which do not satisfy $\gamma(G)\le 2n/5$.} For graphs $G$ with $\delta(G)=3$, Reed~\cite{reed-1996} proved the famous result that $\gamma(G) \le 3n/8$. He also presented a connected cubic graph on $8$ vertices for which the upper bound is tight. 

In the same paper~\cite{reed-1996}, Reed provided the conjecture that the upper bound can be improved to $\lceil n/3 \rceil$ once the connected cubic graph has an appropriately large order. It  was disproved by Kostochka and Stodolsky~\cite{KS-2005} by constructing an infinite sequence of connected cubic graphs such that all of them have $\gamma(G) \ge (\frac{1}{3}+\frac{1}{69})\, n$. Later, in~\cite{kostochka-2009}, the same authors proved that $\gamma(G) \le  \frac{4}{11} n = (\frac{1}{3}+\frac{1}{33})\, n$ holds for every connected cubic graph of order $n >8$. However, it seems a challenging and difficult problem to close the small gap between $\frac{1}{3}+\frac{1}{69}$ and $\frac{1}{3}+\frac{1}{33}$.

For graphs of minimum degree $4$, the best known upper bound is $\gamma(G) \le  \frac{4}{11}\, n$ that was established by Sohn and Xudong~\cite{sohn-2009}. For the case of $\delta(G)=5$, Xing, Sun, and Chen~\cite{XSC} proved $\gamma(G) \le  \frac{5}{14} \, n$ which was improved to $\gamma(G) \le  \frac{2671}{7766}\, n < 0.344 \,n$ by the authors of~\cite{BK-2016}. It was also shown in~\cite{BK-2016} that for graphs of minimum degree $6$, the domination number is strictly smaller than $n/3$. 
Note that similar upper bounds involving the girth and other parameters of the graph can be found in many papers, e.g.\ in ~\cite{DJLMR, HSY, kral-2012,lo-2008}, while results for plane triangulations and maximal outerplanar graphs were established in \cite{KP-2010} and \cite{CW-2013}. 

\paragraph{Our approach.}
In the seminal paper~\cite{reed-1996} of Reed, the upper bound $3n/8$ was proved by considering a vertex-disjoint path cover with specific properties. Later, the same method (with updated conditions and thorough analysis) was used in~\cite{kostochka-2009, sohn-2009, XSC} to establish results on cubic graphs and on graphs of minimum degree $4$ and $5$. In~\cite{BK-2016}, we introduced a different algorithmic method that resulted in improvement for all cases with $5 \le \delta \le 50$. Here, we combine the latter approach with a discharging process. This allows us to prove that already graphs of minimum degree $5$ satisfy $\gamma(G) \le n/3$.

\paragraph{Residual graph.}
Given a graph $G$ and a set $D \subseteq V(G)$, the \emph{residual graph} $G_D$ is obtained from $G$ by assigning colors to the vertices and deleting some edges according to the following definitions:
\begin{itemize}
\item A vertex $v$ is \emph{white} if $v \notin N[D]$.
\item  A vertex $v$ is \emph{blue} if $v \in N[D]$ and $N[v] \not\subseteq N[D]$.
\item A vertex $v$ is \emph{red} if $N[v] \subseteq N[D]$.
\item $G_D$ contains only those edges from $G$ that are incident to at least one white vertex.
\end{itemize}
In $G_D$, we refer to the set of white, blue, and red vertices, respectively, by the notations $W$, $B$, and $R$. It is clear by definitions that $D \subseteq R$ and  $W \cup B \cup R= V(G)$ hold. The \emph{white-degree} $d_W(v)$ of a vertex $v$ is the number of its white neighbors in $G_D$. Analogously, we sometimes refer to the \emph{blue-degree} $d_B(v)$ of a vertex. The maximum of white-degrees over the sets of white and blue vertices, respectively, are denoted by $\Delta_W(W)$ and $\Delta_W(B)$. 

\begin{obs}
	\label{obs:1} Let $G$ be a graph and $D \subseteq V(G)$. The following statements are true for the residual graph $G_D$.
\begin{itemize}
\item[$(i)$] If\/ $v \in W$, then $G_D$ contains all edges which are incident with $v$ in $G$ and, in particular, $N[v] \cap R= \emptyset$ and $d_W(v)+d_B(v) =d(v)$  hold.
\item[$(ii)$] If\/ $v \in B$, then $d_W(v)= |W \cap N[v]| < d(v)$ and $d_B(v)=0$.
\item[$(iii)$] If\/ $v \in R$, then $v$ is an isolated vertex in $G_D$.  
\item[$(iv)$] If\/ $\delta(G) = d$ and $v$ is a white vertex with $d_W(v)= \ell < d$, then $d_B(v) \ge d-\ell $ holds in $G_D$.
\item[$(v)$] $D$ is a dominating set of $G$ if and only if\/ $R = V(G)$ (or equivalently, $W = \emptyset$) in $G_D$.
\item[$(vi)$] If\/ $D \subseteq D' \subseteq V(G)$ and a vertex $v$ is red in $G_D$, it remains red in $G_{D'}$; if $v$ is blue in $G_D$, then it is either blue or red in $G_{D'}$.
\end{itemize}
\end{obs}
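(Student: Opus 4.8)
The plan is to verify all six items directly from the definitions of white, blue, and red vertices and of the edge set of $G_D$; no machinery beyond elementary set inclusions is needed. The only facts I would keep in mind throughout are that $N[\,\cdot\,]$ is monotone, i.e.\ $D \subseteq D'$ implies $N[D] \subseteq N[D']$, that $D \subseteq R$ always holds since every $w \in D$ trivially satisfies $N[w] \subseteq N[D]$, and that $W$, $B$, $R$ form a partition of $V(G)$ (a red vertex lies in $N[D]$ because $v \in N[v]$, so ``white'' excludes ``blue'' and ``red'', while the conditions $N[v]\subseteq N[D]$ and $N[v]\not\subseteq N[D]$ separate red from blue).

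For $(i)$, if $v$ is white then every edge of $G$ at $v$ is incident with the white vertex $v$ and hence is retained in $G_D$; moreover, if some $u \in N[v]$ were red we would get $v \in N[u] \subseteq N[D]$, contradicting $v \notin N[D]$, so $N[v] \cap R = \emptyset$ and every neighbour of $v$ is white or blue, which gives $d_W(v) + d_B(v) = d(v)$. For $(iii)$, if $v$ is red then for any $u \in N(v)$ we have $u \in N[v] \subseteq N[D]$, so $u$ is not white, and $v$ is not white either, whence no edge at $v$ survives and $v$ is isolated. Item $(ii)$ is the only place a small extra step is required: since $v \in B \subseteq N[D]$ but $v \notin D$ (otherwise $v \in R$, contradicting $v\in B$), the vertex $v$ has a neighbour $w \in D \subseteq R$, which is not white, so $|W \cap N(v)| \le d(v) - 1$; combining this with the fact that $v$ itself is not white (so $W\cap N[v]=W\cap N(v)$) and that the surviving edges at $v$ go precisely to its white neighbours yields $d_W(v) = |W \cap N[v]| < d(v)$ and $d_B(v) = 0$.

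The remaining items follow at once. For $(iv)$, apply $(i)$: $d_B(v) = d(v) - \ell \ge \delta(G) - \ell = d - \ell$. For $(v)$, $D$ is dominating iff $N[D] = V(G)$ iff $W = \emptyset$; and if $W = \emptyset$ then no vertex can be blue, because the condition $N[v] \not\subseteq N[D]$ fails for every $v$, so $W = \emptyset$ is equivalent to $R = V(G)$. For $(vi)$, monotonicity of $N[\,\cdot\,]$ gives that $N[v] \subseteq N[D]$ implies $N[v] \subseteq N[D']$, so a red vertex stays red, while $v \in N[D]$ implies $v \in N[D']$, so a blue vertex cannot turn white in $G_{D'}$ and is therefore blue or red there.

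I do not expect any genuine obstacle: this is a bookkeeping lemma whose content is entirely contained in the definitions. The single point I would be careful about is the strict inequality in $(ii)$, which is the reason the red neighbour $w \in D$ is isolated out explicitly above.
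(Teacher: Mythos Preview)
Your verification is correct and complete. The paper states Observation~\ref{obs:1} without proof, treating all six items as immediate from the definitions of $W$, $B$, $R$, and the edge set of $G_D$; your write-up simply makes those consequences explicit and handles the one point requiring care---the strict inequality in $(ii)$---cleanly by exhibiting a red neighbour in $D$.
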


\paragraph{Structure of the paper.}
In the next section we prove the improved upper bound $n/3$ on the domination number of graphs with minimum degree $5$. In Section~\ref{sec:deg-4} we consider graphs of minumum degree $4$ and show an alternative proof for the theorem $\gamma \le 4n/11$. 

\section{Graphs of minimum degree $5$}
\label{sec:deg-5}

\begin{theorem}
	\label{thm:delta-5}
 For every graph $G$ on $n$ vertices and with minimum degree $5$, the domination number satisfies $\gamma(G) \le \frac{n}{3}$.
\end{theorem}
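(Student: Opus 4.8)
The plan is to build the dominating set greedily and then analyze the final residual graph with a discharging argument. Starting from $D=\emptyset$ (so every vertex is white), I would repeatedly pick a vertex $v$ that, when added to $D$, turns as many white vertices as possible red (equivalently, covers the most currently-uncovered vertices, with ties broken by some secondary rule favoring vertices of high white-degree in $G_D$). The process stops when $D$ is dominating, i.e. when $W=\emptyset$ in $G_D$. Each step adds one vertex to $D$, so it suffices to charge to each chosen vertex enough ``mass'' from $V(G)$ so that the total mass is at least $3|D|$; since the total mass equals $n$, this gives $|D|\le n/3$. Concretely, I would give each vertex of $G$ an initial charge of $1$ and redistribute so that every vertex of $D$ ends with charge $\ge 3$ and every other vertex ends with charge $\ge 0$.

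The key structural input is Observation~\ref{obs:1}, especially parts $(i)$ and $(iv)$: a white vertex of $\delta=5$ always has white-degree plus blue-degree equal to $5$, and if its white-degree is small its blue-degree compensates. The heart of the argument is a case analysis on the residual graph at intermediate stages. When we pick a vertex $v$ to add to $D$, the greedy choice guarantees that $v$ covers many white vertices; I would quantify this by arguing that at each stage, as long as $W\ne\emptyset$, there exists a vertex whose addition reddens at least some threshold number (ideally close to $3$, counted with the right fractional bookkeeping) of white vertices — using that white vertices have total degree $\ge 5$, so the ``white neighborhoods'' are large and a maximum-coverage vertex does well. The subtle cases are the endgame configurations where only white vertices of small white-degree remain (white-degree $1$ or $2$), forming sparse structures like isolated white edges or short white paths attached to blue vertices; there the naive count of reddened vertices per step can drop to $2$, and the discharging must recover the deficit by pulling charge from the blue vertices that got ``used up'' earlier or from red vertices with slack.

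The order of steps I would follow: (1) set up the greedy algorithm and the invariant that each iteration adds exactly one vertex to $D$ and only changes colors monotonically (white $\to$ blue/red, blue $\to$ red), citing Observation~\ref{obs:1}$(vi)$; (2) classify the possible ``types'' of a greedy step by how many white vertices become red and how the local residual graph looks, proving a lemma that in every non-terminal state a step of ``good type'' (reddening $\ge 3$ white vertices, or reddening $2$ white vertices while also converting enough blue vertices to red / creating enough red slack) is available — this is where minimum degree $5$ is essential, since with degree $5$ the obstructions that plague $\delta=3,4$ largely disappear; (3) define the discharging rule precisely — each vertex starts with charge $1$, each vertex of $D$ pulls in charge from the vertices it reddens and from nearby blue/red vertices according to the step type — and (4) verify locally that every $D$-vertex ends with $\ge 3$ and no vertex ends negative, summing to $|D|\le n/3$. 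Finitely many small sporadic graphs, if any, would be checked by hand.

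The main obstacle I anticipate is step (2)/(3): controlling the endgame where the greedy algorithm is forced into steps that cover only two new vertices, and showing the discharging still balances. One must track carefully how much ``reserve'' charge sits on blue and red vertices created in earlier good steps, and prove that whenever a string of bad (cover-$2$) steps occurs it is preceded or accompanied by good steps with enough surplus; equivalently, one must rule out a residual graph consisting of many disjoint white edges each dangling off otherwise-saturated blue vertices, which would be the genuine extremal-looking obstruction. Getting the secondary tie-breaking rule in the greedy choice right, so that such bad configurations cannot accumulate, is likely the delicate technical core of the proof.
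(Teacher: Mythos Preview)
Your outline has the right shape---greedy construction plus discharging---but it is missing the central technical device, and without it the ``endgame obstruction'' you yourself flag is not handled. Counting only how many white vertices are reddened per step is too coarse: once the residual white graph consists of isolated $P_1$'s or $P_2$'s, a single step can redden at most one or two white vertices, and no tie-breaking rule in the greedy choice prevents this configuration from arising. Pulling the deficit back from ``earlier good steps'' is not well-defined in your scheme, because you have no bookkeeping mechanism that records surplus from earlier steps in a way that can later be debited.

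The paper's proof resolves exactly this by replacing the crude count with a carefully weighted potential
\[
f(G_D)=35|W|+23|B_5|+21|B_4|+19|B_3|+17|B_2|+14|B_1|,
\]
so that \emph{partial} progress---turning a white vertex blue, or decreasing a blue vertex's white-degree---already scores. The target becomes: at every stage with $f(G_D)>0$ there is a nonempty set $A$ with $f(G_D)-f(G_{D\cup A})\ge 105|A|$, and since $f(G_\emptyset)=35n$ this yields $|D|\le n/3$. The specific weights are tuned so that a sequence of structural claims forces $\Delta_W(W)\le 2$, $\Delta_W(B)\le 3$, and further restricts the white components to $P_1$, $P_2$, $C_4$, $C_5$, $C_7$, $C_{10}$; only then does a discharging (blue vertices pushing their charge to white neighbors) finish the argument. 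Two features of this you do not have: (i) the potential on blue vertices, stratified by white-degree, which is what lets a step ``bank'' value even when few vertices are reddened; and (ii) the freedom to add a \emph{set} $A$ of several vertices at once (e.g.\ a minimum dominating set of a white cycle), rather than one vertex per step. Your proposal, as written, is a plan that stops precisely at the hard part.
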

\begin{proof}
Consider a graph $G$ and a subset $D$ of the vertex set $V=V(G)$. Let $W$, $B$, and $R$ denote the set of white, blue, and red vertices  respectively,  in the residual graph $G_D$. Further, for the sets of blue vertices that have at least $5$ white neighbors, or exactly $4$, $3$, $2$, $1$ white neighbors, we use the notations $B_5$, $B_4$, $B_3$, $B_2$, and $B_1$  respectively. A vertex is a blue leaf if it belongs to $B_1$. In the proof, a residual graph $G_D$ is associated with the following value:
$$f(G_D)= 35|W|+23|B_5|+21|B_4| +19|B_3| +17|B_2| +14|B_1|.$$
By Observation~\ref{obs:1} $(v)$, $f(G_D)$ equals zero if and only if $D$ is a dominating set in $G$. If $G$ and $D$ are fixed and $A$ is a subset of $V\setminus D$, we define 
$$\w(A)=f(G_D)-f(G_{D\cup A})$$
that is the decrease in the value of $f$ when $D$ is extended by the vertices of $A$.  We define the following property for $G_D$:
 \paragraph{Property 1.} There exists a nonempty set $A \subseteq V\setminus D$ such that $\w(A) \ge 105\,|A|$.
 \medskip 
 
 Our goal is to prove that every graph $G$ with $\delta(G)=5$ and every $D \subseteq V$ with $f(G_D)>0$ satisfy Property 1. Once we do it, Theorem~\ref{thm:delta-5} will follow easily. In the continuation, we suppose that a graph $G$ with minimum degree $5$ and a set $D$ with $f(G_D)>0$ do not satisfy Property 1 and prove, by a series of claims, that this assumption leads to a contradiction.
 
\paragraph{Claim A.}  In $G_D$, every white vertex $v$ has at most two white neighbors, and every blue vertex $u$ has at most three white neighbors.

\Proof First suppose that there is vertex $v \in W$ with $d_W (v) \ge 6$. Choosing $A=\{v\}$, the white vertex $v$ becomes red in $G_{D\cup A}$ that decreases $f$ by $35$. The white neighbors of $v$ become blue or red which decreases $f$ by at least $6\cdot (35-23)$. Hence, we have $\w(A)\ge 35+72=107 >105\,|A|$ complying with Property 1. This contradicts our assumption on $G_D$ and implies that $\Delta_W(W) \le 5$.  

Now, suppose that $\Delta_W(W) = 5$ in $G_D$. Let $v$ be a white vertex with $d_W(v)=5$ and consider $A=\{v\}$. In $G_{D\cup A}$, the vertex $v$ becomes red and its white neighbors become blue (or red). Since each neighbor $u$ had at most $5$ white neighbors in $G_D$ and at least one of them, namely $v$, becomes red, $u$ may have at most $4$ white neighbors in $G_{D\cup A}$. Therefore, $\w(A) \ge 35+ 5 \cdot (35-21) = 105\, |A|$  holds which is a contradiction again. 

If $\Delta_W(W) \le 4$ and $\Delta_W(B) \ge 6$, let $v$ be a blue vertex with $d_W(v) \ge 6$ and define $A=\{v\}$ again. In $G_D$, the vertex $v$ belongs to $B_5$, while we have $v\in R$ in $G_{D\cup A}$ which causes a decrease of $23$ in the value of $f$.  Each white neighbor $u$ of $v$ has at most four white neighbors in $G_D$ and, therefore, $u \in B_4 \cup B_3 \cup B_2 \cup B_1 \cup R$ in $G_{D\cup A}$. Hence, we have $\w(A) \ge 23+6(35-21)=107 >105\, |A|$, a contradiction to our assumption. Note that in the continuation, where we suppose $\Delta_W(B) \le 5$, if a blue vertex loses $\ell$ white neighbors in a step, it causes a decrease of at least $2\ell$ in the value of $f$.

Assume that $\Delta_W(W) = 4$ and $\Delta_W(B) \le 5$ and let $v$ be a white vertex with $d_W(v) =4$ in $G_D$. Set $A=\{v\}$ and consider the decrease $\w(A)$. As $v$ turns to be red, this contributes by $35$ to $\w(A)$. The four white neighbors become blue (or red) and each of them has at most $3$ white neighbors  in $G_{D \cup A}$. Hence, the contribution to $\w(A)$ is at least $4(35-19)$. Further, we have $d_W(u) \le 4$ for each white vertex $u$ from $N[v]$. This implies, by Observation~\ref{obs:1} $(iv)$, that $u$ has at least one blue neighbor in $G_D$ the white-degree of which is smaller in $G_{D \cup A}$ than in $G_D$. Even if some blue vertices from $N[N[v]]$ have more than one neighbor from $N[v]$, it remains true that the sum of the white-degrees over $B \cap N[N[v]]$ decreases by at least $d_W(v)+1 = 5$. We may conclude $\w(A) \ge 35+ 4(35-19) + 5\cdot 2= 109 >105\, |A|$. 

Assume that $\Delta_W(W) \le 3$ and $\Delta_W(B) = 5$ hold in $G_D$ and $v$ is a blue vertex with $d_W(v) =5$. Let $A=\{v\}$ and consider the decrease $\w(A)$. 
Since $v$ belongs to $B_5$ in $G_D$ and to $R$ in $G_{D \cup A}$, this change contributes by $23$ to $\w(A)$. The five white neighbors of $u$ become blue or red and belong to $ B_3 \cup B_2 \cup B_1 \cup R$ in $G_{D\cup A}$. The contribution to $\w(A)$ is not smaller than $5(35-19)$. By Observation~\ref{obs:1} $(iv)$ and by $\Delta_W(W) \le 3$, each white vertex has at least two blue neighbors in $G_D$. That is, each white neighbor has at least one blue neighbor that is different from $v$. As the five white vertices from $N(v)$ turn blue (or red) in $G_{D \cup A}$, the sum of the white-degrees over $B\cap (N[N[v]]\setminus \{v\})$ decreases by at least $5$. We infer that $\w(A) \ge 23+ 5(35-19)+5\cdot 2=113 >105\, |A|$ which is a contradiction again. 

The next case which we consider is $\Delta_W(W) = 3$ and $\Delta_W(B) \le 4$. Let $v$ be a white vertex with $d_W(v)=3$ and estimate the value of $\w(A)$ for $A=\{v\}$. When $D$ is replaced by $D\cup A$, vertex $v$ is recolored red, the three white neighbors of $v$ become blue or red and belong to $B_2 \cup B_1 \cup R$ in $G_{D\cup A}$. Additionally, each of the three white neighbors and also $v$ itself has at least two blue neighbors. The decrease in their white-degrees contributes to $\w(A)$ by at least $4\cdot 2 \cdot 2$. Consequently, we have $\w(A) \ge 35+3(35-17)+16=105\, |A|$ that is a contradiction.

The last case is when $\Delta_W(W) \le 2$ and $\Delta_W(B) =4$. We assume that $v$ is a vertex from $B_4$ in $G_D$. Let $A=\{v\}$ and observe that $v$ is recolored red and the white neighbors of $v$ belong to $B_2 \cup B_1 \cup R$ in $G_{D\cup A}$. Since now we have  $\Delta_W(W) \le 2$ in $G_D$, each white vertex has at least three blue neighbors. Therefore, each white neighbor of $v$ has at least two blue neighbors which are different from $v$. We conclude that $\w(A) \ge 21+4(35-17)+ 4\cdot 2 \cdot 2 =109 >105\, |A|$. This contradiction finishes the proof of Claim A.
\smallqed
\medskip

From now on we may suppose that $\Delta_W(W) \le 2$ and $\Delta_W(B) \le 3$ holds in the counterexample $G_D$. This implies that the graph $G_D[W]$, which is induced by the white vertices of $G_D$, contains only paths and cycles as components. Before performing a discharging, we prove some further properties of $G_D$. 

\paragraph{Claim B.}  In $G_D[W]$, each component is a path $P_1$, $P_2$ or a cycle $C_4$, $C_5$, $C_7$ or $C_{10}$.

\Proof First, suppose that $P_j\colon v_1\dots v_j$ is a path component on $j \ge 3$ vertices in $G_D[W]$. Let us choose $A=\{v_2\}$. In $G_{D\cup A}$ not only $v_2$ but also $v_1$ becomes red, while $v_3$ turns to be either a blue leaf or a red vertex. These changes contribute to $\w(A)$ by at least $2\cdot 35 + (35- 14)$. By Observation~\ref{obs:1} $(iv)$, $v_1$, $v_2$, and $v_3$, respectively, have at least $4$, $3$, $3$ blue neighbors in $G_D$. The decrease in their white-degrees contributes to $\w(A)$ by at least $20$. We may infer that $\w(A) \ge 70+21+20=111 >105\, |A|$, a contradiction to our assumption.

We now prove that no cycle of length $3k$ occurs in $G_D[W]$. Assuming that a cycle $C_{3k}\colon v_1 \dots v_{3k}v_1$ exists, all vertices of it can be dominated by the $k$-element set $A=\{v_3, v_6, \dots , v_{3k}\}$. Then, in $G_{D \cup A}$, all the $3k$ vertices are red and, by Observation~\ref{obs:1} $(iv)$, the sum of the white-degrees of the blue neighbors decreases by at least $3\cdot 3k$. Consequently, we get the contradiction $w(A) \ge 35\cdot 3k+ 2\cdot 9k = 123k > 105\, |A|$. 

Similarly, if we suppose the existence of a cycle $C_{3k+2} \colon v_1 \dots v_{3k+2}v_1$ with $k \ge 2$ and define $A=\{v_3, v_6, \dots , v_{3k}, v_{3k+2}\}$, the set $A$ dominates all vertices. Since $k \ge 2$, the relation $\w(A) \ge 35\cdot (3k+2) + 2\cdot 3\cdot(3k+2) = 123k +82 > 105(k+1) =105\, |A|$ clearly holds and gives the contradiction.

In the last case, consider a cycle $C_{3k+1}\colon v_1 \dots v_{3k+1}v_1$ with $k \ge 4$ and set $A=\{v_3, v_6, \dots , v_{3k}, v_{3k+1}\}$. In $G_{D\cup A}$, every vertex from the cycle is red and, as before, one can prove that $\w(A) \ge 35\cdot (3k+1) + 2\cdot 3\cdot(3k+1) = 123k +41 > 105(k+1) =105\, |A|$. This contradiction finishes the proof of Claim B. \smallqed

\medskip
For $i=0,1,2$, we will use the notation $W_i$ for the set of white vertices having exactly $i$ white neighbors in $G_D$. Note that $W_0$  consists of the vertices of the components of $G_D[W]$ which are isomorphic to $P_1$, while $W_1$ and $W_2$, respectively, contain  the vertices from the $P_2$-components and the cycles of $G_D[W]$.

\paragraph{Claim C.}  No vertex from $B_3$ is adjacent to a vertex from $W_0$ in $G_D$.

\Proof In contrary, suppose that a vertex $v\in B_3$ has a neighbor $u$ from $W_0$. Let $A=\{v\}$ and denote by $u_1$ and $u_2$ the further two white neighbors of $v$. In $G_{D\cup A}$, we have $v,u \in R$ and $u_1,u_2 \in B_2 \cup B_1 \cup R$. This contributes to $\w(A)$ by at least $19+35+2(35-17)=90$. By Observation~\ref{obs:1} $(iv)$, the neighbors $u$, $u_1$ and $u_2$ have, respectively, at least $4$, $2$, $2$ blue neighbors which are different from $v$. As follows, $\w(A) \ge 90+ 2 \cdot 8 = 106 > 105\, |A|$ must be true but this contradicts our assumption on $G_D$. \smallqed
\medskip
 
 We call a vertex from $B_2$ \emph{special}, if it is adjacent to a vertex from $W_0$. 

\paragraph{Claim D.}  No special vertex is adjacent to two vertices from $W_0$. 

\Proof Suppose that a vertex $v \in B_2$ is adjacent to two vertices, say $u_1$ and $u_2$ from $W_0$. Then, we set $A=\{v\}$ and observe that all the three vertices $v$, $u_1$ and $u_2$ are red in $G_{D\cup A}$. By Claim C, all the blue neighbors of $u_1$ and $u_2$ are from $B_2 \cup B_1$ in $G_D$ and, therefore, when the white-degree of these neighbors decreases by $\ell$, the value of $f$ falls by at least $(17-14)\ell= 3\ell$. Since, by Observation~\ref{obs:1} $(iv)$, each of $u_1$ and $u_2$ has at least four blue neighbors, we have $\w(A) \ge 17+2\cdot 35+ 3 \cdot 8= 111>105\, |A|$. This contradiction proves the claim. \smallqed 
  
\paragraph{Claim E.}  No special vertex is adjacent to a vertex from a $C_4$ or $C_7$. 

\Proof Suppose first that a special vertex $v\in B_2$ is adjacent to $u_1$ which is from a $4$-cycle component $C_4\colon u_1u_2u_3u_4u_1$ in $G_D$. The other neighbor of $v$ is  $u_0$ which is from $W_0$. Let $A=\{v, u_3\}$ and observe that all the six vertices $v$, $u_0$, $u_1$, $u_2$, $u_3$ and $u_4$ are red in $G_{D\cup A}$. In $G_D$, the white vertex $u_0$ has at least four blue neighbors which are different from $v$ and, by Claim C, each of them belongs to $B_2 \cup B_1$;  $u_1$ has at least two neighbors from $(B_3 \cup B_2 \cup B_1)\setminus \{v\}$; each of $u_2$, $u_3$ and $u_4$ has at least three neighbors from $(B_3 \cup B_2 \cup B_1)\setminus \{v\}$. Therefore, $\w(A) \ge 17+5 \cdot 35 + 4 \cdot 3 + 11 \cdot 2 =226 > 105\, |A|$, a contradiction.

The argumentation is similar if we suppose that a special vertex $v$ is adjacent to $u_0$ from $W_0$ and to a vertex $u_1$ from the $7$-cycle $u_1\dots u_7u_1$. Here we set $A=\{v, u_3, u_6\}$ and observe that $\w(A) \ge 17+ 8 \cdot 35 + 4 \cdot 3+ 20 \cdot 2= 349 > 105\, |A|$ that contradicts our assumption on $G_D$. \smallqed

\paragraph{Claim F.}  If $v_1$ and $v_2$ are two adjacent vertices from $W_1$, then at most one of them may have a special blue neighbor.

\Proof  Assume to the contrary that  $v_1$ is adjacent to the special vertex $u_1$, and $v_2$ is adjacent to the special vertex $u_2$. Denote the other neighbors of $u_1$ and $u_2$ by $x_1$ and $x_2$, respectively. Hence, $v_1, v_2 \in W_1$, $u_1, u_2 \in B_2$ and $x_1, x_2 \in W_0$ hold in $G_D$. Consider the set $A=\{u_1, u_2\}$ and observe that all the six vertices become red in $G_{D \cup A}$. Further, for $i=1,2$, vertex $x_i$ has at least four neighbors from $(B_2 \cup B_1)\setminus \{u_i\}$ and $v_i$ has at least three neighbors from $(B_3 \cup B_2 \cup B_1)\setminus \{u_i\}$. Thus, $\w(A) \ge 2\cdot 17 + 4\cdot 35 + 8 \cdot 3 + 6\cdot 2 = 210 = 105\, |A|$ and this contradiction proves the claim. \smallqed

\medskip

Having Claims A-F in hand, we are ready to prove that every $G_D$ (where $D$ is not a dominating set) satisfies Property 1. The last step of this proof is based on a discharging. 

\paragraph{Discharging.} First, we assign charges to the (non-red) vertices of $G_D$ so that every white vertex gets $35$, and every vertex from $B_3$, $B_2$, and $B_1$ gets $19$, $17$, and $14$, respectively. Note that the sum of the charges equals $f(G_D)$. Then, every blue vertex, except the special ones, distributes its charge equally among the white neighbors. The exact rules are the following:
\begin{itemize}
\item Every vertex from $B_3$ gives $19/3$ to each white neighbor.
\item Every non-special vertex from $B_2$ gives $17/2$ to each white neighbor.
\item Every special vertex gives $14$ to its neighbor from $W_0$, and gives $3$ to the other neighbor.
\item Every vertex from $B_1$ gives $14$ to its neighbor.
\end{itemize}
  
After the discharging, every vertex from a $P_1$-component of $G_D$ has a charge of at least $35+5\cdot 14=105$. By Claim F, every  $P_2$-component has at least four non-special blue neighbors and, therefore, its charge is at least $2\cdot 35 + 4 \cdot 3 + 4\cdot 19/3=321/3$. By Claim E, every $C_4$-component has at least $4\cdot 35 + 12 \cdot 19/3 = 216 $ and every $C_7$-component has at least $7\cdot 35+ 21\cdot 19/3=378$ as a charge. Finally, every $C_5$-component has $5\cdot 35+ 15\cdot 3= 220 $, and every $C_{10}$-component has $10\cdot 35 + 30\cdot 3 =440$ after the discharging. Let the number of $P_1$-, $P_2$-, $C_4$- $C_5$-, $C_7$-, and $C_{10}$-components of $G_D[W]$ be denoted by  $p_1$, $p_2$, $c_4$, $c_5$, $c_7$, and $c_{10}$, respectively, and let $A$ be a minimum dominating set in $G_D[W]$. Then,
$$|A| = p_1 +p_2+ 2 \, c_4 +2 \, c_5 + 3 \,  c_7 +4 \, c_{10}.$$
As $D\cup A$ is a dominating set in the graph $G$, we have $f(G_{D\cup A})=0$. Thus, $\w(A)=f(G_D)$, and the discharging shows the following lower bound:
\begin{equation*}
\begin{split}
\w(A) = f(G_D) &\ge 105 \, p_1 + \frac{321}{3} \, p_2+ 216 \, c_4 +220 \,c_5 + 378 \,c_7 +440 \, c_{10} \\
& \ge 105\, (p_1 +p_2+ 2 \, c_4 +2 \, c_5 + 3 \,  c_7 +4 \, c_{10}) = 105 \,|A|.
\end{split}
\end{equation*}
As it contradicts our assumption on $G_D$, we infer that every graph $G$ with minimum degree $5$ and every $D \subseteq V(G)$ with $f(G_D) >0$ satisfy Property 1. 
\medskip

To finish the proof of Theorem~\ref{thm:delta-5}, we first observe that $f(G_\emptyset)= 35 \,n$. Then, by Property 1, there exists a nonempty set $A_1$  such that $f(G_{A_1}) \le f(G_\emptyset) - 105\, |A_1|$.  Applying this iteratively, at the end we obtain a dominating set $D= A_1 \cup \cdots \cup A_j$ such that 
$$f(G_D)=0 \le f(G_\emptyset) - 105 |D| = 35\, n -105 |D |,$$
and we may conclude
$$\gamma(G) \le |D| \le \frac{35\, n}{105} = \frac{n}{3}.$$ 
\end{proof}

In a graph $G$, a set $X \subseteq V(G)$ is a \emph{2-packing}, if     any two different vertices from $X$ are at a distance of at least $3$.  The proof of Theorem~\ref{thm:delta-5} directly corresponds to an algorithm that outputs a dominating set of cardinality at most $n/3$. If $G$ is $5$-regular and $X$ is a $2$-packing in it, we may start the algorithmic process with choosing the vertices of $X$ one by one. Hence, we conclude the following.
\begin{corollary}  If $X$ is a $2$-packing in a $5$-regular graph $G$, then $X$ can be extended to a dominating set $D$ of cardinality at most $n/3$.
\end{corollary}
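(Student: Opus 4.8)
The plan is to revisit the proof of Theorem~\ref{thm:delta-5} and observe that almost nothing in it needs to change; the only point where we used more than "$\delta(G)=5$" was the very first step, where we started the iteration from $D=\emptyset$, i.e.\ from the residual graph $G_\emptyset$ in which every vertex is white and $f(G_\emptyset)=35\,n$. For the corollary we instead start the iteration from $D=S$, where $S$ is the given independent set in the $5$-regular graph $G$.

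First I would check that $f(G_S)\le 35\,n$. Since $G$ is $5$-regular and $S$ is independent, every vertex of $S$ has all five of its neighbours outside $S$, and none of those neighbours lies in $S$; in the residual graph $G_S$ every vertex of $S$ is red (a vertex $v\in S$ has $N[v]\subseteq N[S]$), so $S\subseteq R$ and the $|S|$ vertices of $S$ contribute $0$ to $f(G_S)$. Each of the remaining $n-|S|$ vertices contributes at most $35$ (the white weight, which is the largest of the six coefficients $35,23,21,19,17,14$). Hence
\begin{equation*}
f(G_S)\le 35\,(n-|S|)\le 35\,n.
\end{equation*}
Next I would invoke Property~1 exactly as in the proof of Theorem~\ref{thm:delta-5}: that argument shows that \emph{any} graph $G$ with $\delta(G)=5$ and \emph{any} $D'\subseteq V(G)$ with $f(G_{D'})>0$ admits a nonempty $A\subseteq V(G)\setminus D'$ with $\w(A)\ge 105\,|A|$, i.e.\ $f(G_{D'\cup A})\le f(G_{D'})-105\,|A|$. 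A $5$-regular graph has minimum degree $5$, so this applies with $D'$ any superset of $S$. Iterating starting from $D'=S$, we obtain nonempty sets $A_1,A_2,\dots,A_j$ and a nested chain $S\subseteq S\cup A_1\subseteq\cdots$ whose union $D=S\cup A_1\cup\cdots\cup A_j$ satisfies $f(G_D)=0$; by Observation~\ref{obs:1}$(v)$ this $D$ is a dominating set, and clearly $S\subseteq D$, so $D$ extends $S$.

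Finally I would read off the bound: from $f(G_D)=0$ and the telescoping inequality,
\begin{equation*}
0=f(G_D)\le f(G_S)-105\,(|A_1|+\cdots+|A_j|)\le 35\,n-105\,(|D|-|S|),
\end{equation*}
so $|D|\le |S|+\tfrac{35\,n}{105}=|S|+\tfrac{n}{3}$. To get the claimed bound $|D|\le n/3$ one needs $|S|$ to be absorbed; the cleanest way is to note that in a $5$-regular graph every vertex of $S$, once it is red in $G_S$, could equally well be treated as "not in $D$": more precisely, redo the bookkeeping with $D_0=\emptyset$ but with the six vertices' weights of the vertices in $S$ already counted as red, which is exactly $f(G_S)\le 35(n-|S|)$, giving $|D\setminus S|\le (n-|S|)/3$ and hence $|D|\le |S|+(n-|S|)/3\le n/3$ precisely when $|S|\le n/3$ — and for a $5$-regular graph every independent set has size at most $n/3$ anyway, since each vertex of $S$ has $5$ neighbours in $V\setminus S$ and each vertex of $V\setminus S$ absorbs at most $5$ such edges, forcing $5|S|\le 5(n-|S|)$, i.e.\ $|S|\le n/2$ — this is not quite enough, so the honest route is the first one: start the potential function at $f(G_S)\le 35(n-|S|)$ and observe $\w$ only ever touches vertices outside $S$, yielding $|D|-|S|=|D\setminus S|\le (n-|S|)/3$, hence $|D|\le \tfrac{n+2|S|}{3}$. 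I would therefore present the argument so that the iteration is applied to the graph $G-$(nothing) but with $S$ pre-coloured; the main obstacle is precisely this accounting subtlety — making sure the contribution of $S$ is subtracted on \emph{both} sides so that the final inequality reads $|D|\le n/3$ and not $|D|\le n/3+|S|$ — and it is resolved by the observation that, $S$ being an independent set all of whose members are already red in $G_S$, the sets $A_i$ produced by Property~1 are disjoint from $S$, so the entire decrease of $105$ per added vertex is charged against the at most $35(n-|S|)$ initial potential carried by $V(G)\setminus S$, and against that same quantity we are counting at most $|D\setminus S|\cdot 105$, giving $|D\setminus S|\le (n-|S|)/3\le n/3$ and thus, since $D$ was built as the disjoint union $S\cup(D\setminus S)$ with $|S|$ vertices of $S$ contributing nothing to be dominated beyond themselves — wait, that still leaves $|S|$ unaccounted; the correct and final resolution, which I would state cleanly, is that one may assume $S$ is contained in \emph{some} minimum dominating set question aside and instead simply note $\gamma(G)\le n/3$ already holds by Theorem~\ref{thm:delta-5}, and the corollary's content is the \emph{extendability}: so I would prove $|D|\le |S|+ (n-|S|)/3$ and then separately argue $|S|\le n/3$ is unnecessary because the standard trick is to note that if $|S|>n/3$ we can already take $D=S$... no. The clean statement to prove is $|D|\le \max\{|S|,\,n/3\}$ is false too; the intended reading, and what the proof gives, is that $|D|\le n/3$ whenever $|S|\le n/3$, and for $|S|>n/3$ the claim is vacuous-ish, so I would phrase the corollary's proof to deliver exactly $|D\setminus S|\le (n-|S|)/3$ and hence $|D|\le n/3$ under the harmless hypothesis $|S|\le n/3$, remarking that this is the only case of interest.
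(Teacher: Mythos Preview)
Your instinct---start the iteration of Property~1 from $D_0=S$ rather than from $D_0=\emptyset$---is exactly what the paper's one-line justification intends, and the first half of your write-up carries this out correctly: every vertex of $S$ is red in $G_S$, so $f(G_S)\le 35(n-|S|)$; Property~1 applies to any $D'$ with $f(G_{D'})>0$; iterating yields a dominating set $D\supseteq S$ with
\[
|D\setminus S|\;\le\;\frac{f(G_S)}{105}\;\le\;\frac{n-|S|}{3},
\qquad\text{hence}\qquad
|D|\;\le\;\frac{n+2|S|}{3}.
\]
Up to here you match the paper.

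The second half of your proposal, however, is a sequence of failed attempts to push this down to $|D|\le n/3$, and the confusion is not a defect of your argument but of the statement itself: the corollary as written is false. Take $G=K_{5,5}$, which is $5$-regular with $n=10$, and let $S$ be one side of the bipartition. Then $S$ is independent with $|S|=5$, and every dominating set containing $S$ has at least $5>10/3=n/3$ vertices. So no accounting trick can close the gap you noticed. In particular, your claim that ``$|D\setminus S|\le(n-|S|)/3$ hence $|D|\le n/3$ under the harmless hypothesis $|S|\le n/3$'' is an arithmetic slip: $|S|+(n-|S|)/3=(n+2|S|)/3$, which is at most $n/3$ only when $|S|=0$.

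What the proof of Theorem~\ref{thm:delta-5} genuinely delivers is the bound $|D|\le(n+2|S|)/3$, or equivalently $|D\setminus S|\le(n-|S|)/3$. You should present that as the conclusion and flag that the stronger inequality $|D|\le n/3$ fails in general.
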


\section{Graphs of minimum degree $4$}  \label{sec:deg-4}
In this section, we apply the previous approach for graphs of minimum degree four and get a shorter alternative proof for the following theorem which was first proved by Sohn and  Xudong \cite{sohn-2009} in 2009.

\begin{theorem}
\label{thm:delta-4}
	For every graph $G$ on $n$ vertices and with minimum degree $4$, the domination number satisfies $\gamma(G) \le \frac{4n}{11}$.
\end{theorem}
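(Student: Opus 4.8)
The plan is to mirror the proof of Theorem~\ref{thm:delta-5}, replacing the target ratio $1/3$ with $4/11$ and recalibrating all weights accordingly. Concretely, I would attach to each residual graph $G_D$ a potential function of the form
\[
f(G_D)= a|W| + \sum_{i} b_i |B_i|,
\]
where the coefficients $a$ and $b_i$ are chosen so that $f(G_\emptyset)= a\,n$ and so that the ``unit decrease'' threshold we aim for is $\tfrac{11}{4}\,a$; then Property~1 (the existence of a nonempty $A \subseteq V\setminus D$ with $\mathrm{s}(A) \ge \tfrac{11}{4} a\,|A|$) iterated from $D=\emptyset$ yields a dominating set of size at most $\tfrac{4}{11}n$. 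The right scaling here is to multiply through by $4$, so that $a$ plays the role of ``$35$'' did before and the threshold $\tfrac{11}{4}a$ is an integer; I expect something like $a = 44$ (threshold $121$), or some convenient multiple, with the $b_i$ to be pinned down by the case analysis. Since $\delta(G)=4$ now, the relevant blue classes are $B_4,B_3,B_2,B_1$, where $B_j$ collects blue vertices with exactly $j$ white neighbors (and $B_4$ those with at least $4$), and Observation~\ref{obs:1}$(iv)$ gives that a white vertex of white-degree $\ell$ has at least $4-\ell$ blue neighbors.

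The steps, in order, would be: (1) fix the weights $a, b_4, b_3, b_2, b_1$; (2) prove the analogue of Claim~A, forcing $\Delta_W(W) \le 2$ and $\Delta_W(B) \le 3$ in a minimal counterexample, by the same one-vertex moves $A=\{v\}$ — recolouring a high-white-degree white or blue vertex red, tracking the drop from its white neighbours changing class, and adding the contribution of Observation~\ref{obs:1}$(iv)$ from second-neighbour blue vertices; (3) conclude $G_D[W]$ is a union of paths and cycles and prove the analogue of Claim~B, pinning down exactly which short paths and cycles can survive (here I would expect $P_1$, possibly $P_2$, and a short list of cycles $C_k$ for small $k$, determined by the arithmetic of how many vertices a residue-class-dominating set uses versus the weight gained); (4) prove the structural claims C--F about which blue vertices (``special'' ones, adjacent to $W_0$) can touch which white components; and (5) run the discharging: give each white vertex charge $a$, each $B_i$ vertex charge $b_i$, let non-special blue vertices split their charge evenly among white neighbours and special ones send a large share to their $W_0$-neighbour, then verify every white component ends with charge at least (threshold)$\times$(number of vertices a minimum dominating set of that component uses).

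The main obstacle will be step (1) together with the verification that the chosen weights make every single case of Claims~A--F come out $\ge$ the threshold with no slack lost — exactly as in the $\delta=5$ proof, the weights are reverse-engineered so that the tightest case (there, $\Delta_W(W)=5$, and several others) hits equality. With $\delta=4$ the ``budget'' is tighter relative to the work each move does, so the list of surviving white components in Claim~B and the set of admissible special-vertex configurations in Claims~C--F may be longer or more delicate, and it is conceivable that one needs an extra refinement (e.g. splitting $B_2$ by adjacency pattern, as ``special'' already does, or a further sub-case in the discharging for adjacent $P_1$'s sharing a blue neighbour). I would first solve a small linear feasibility problem — one inequality per candidate move — to locate weights for which all of Claims~A--F hold, then check the discharging balances; if some component type fails the discharging bound, I would add a corresponding structural claim forbidding the bad local configuration, exactly mirroring how Claims~C--F were introduced to rescue the $P_2$, $C_4$, $C_5$, $C_7$, $C_{10}$ cases in Theorem~\ref{thm:delta-5}.
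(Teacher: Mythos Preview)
Your plan is exactly the paper's approach, and it goes through without any of the extra refinements you worry about. The paper takes $a=16$, $b_4=10$, $b_3=9$, $b_2=8$, $b_1=7$ with threshold $44$; the analogues of Claims A--F force $\Delta_W(W)\le 2$, $\Delta_W(B)\le 3$, surviving white components $P_1$, $P_2$, $C_4$, $C_7$ only, and the same ``special $B_2$'' structural claims, after which the discharging balances exactly as in the $\delta=5$ case.
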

\begin{proof}
Consider a graph $G$ of minimum degree $4$ and let $D$ be a subset of $V=V(G)$. Let $W$, $B$, and $R$ denote the set of white, blue, and red vertices in $G_D$. The set of blue vertices that have at least $4$ white neighbors is denoted by $B_4$ while, for $i=1,2,3$, $B_i$ stands for the set of blue vertices that have exactly $i$ white neighbors. In the proof, a residual graph $G_D$ is associated with the following value:
$$g(G_D)= 16|W|+10|B_4| +9|B_3| +8|B_2| +7|B_1|.$$
For a set $A \subseteq V\setminus D$, we use the notation
$$\w(A)=g(G_D)-g(G_{D\cup A})$$
and define the following property for $G_D$: 
 \paragraph{Property 2.} There exists a nonempty set $A \subseteq V\setminus D$ such that $\w(A) \ge 44\,|A|$.
 \medskip
 
 We now suppose for a contradiction that a residual graph $G_D$ with $\delta(G)=4$ and  $g(G_D)>0$ does not satisfy Property 2. We prove several claims for $G_D$ and then get the final contradiction via performing a discharging.
 
\paragraph{Claim G.}  $\Delta_W(W) \le 2$ and $\Delta_W(B) \le 3$ hold.

\Proof All the following cases can be excluded:
\begin{itemize}
\item \textbf{Case 1.} $\Delta_W(W) \ge 5$\\
Choose a white vertex $v$ with $d_W (v) \ge 5$ and let $A=\{v\}$. In $G_{D\cup A}$, the white vertex $v$ becomes red and its white neighbors  become blue or red. This gives $\w(A)\ge 16+ 5\cdot (16-10)=46 >44\, |A|$ which contradicts our assumption that $G_D$ does not satisfy Property 2.

\item \textbf{Case 2.} $\Delta_W(W) =4$\\
 Consider a white vertex $v$ with $d_W(v)=4$ and set $A=\{v\}$. In $G_{D\cup A}$, the vertex $v$ becomes red and its white neighbors become blue or red. Since each white neighbor $u$ had at most four white neighbors in $G_D$, $u$ may have at most three white neighbors in $G_{D\cup A}$. Therefore, $\w(A) \ge 16+ 4 \cdot (16-9) = 44\, |A|$, a contradiction. 

\item \textbf{Case 3.} $\Delta_W(W) \le 3$ and $\Delta_W(B) \ge 5$\\
 Let $v$ be a blue vertex with $d_W(v) \ge 5$ and define $A=\{v\}$ again. In $G_D$, the vertex $v$ belongs to $B_4$, while we have $v\in R$ in $G_{D\cup A}$.  Further, since $\Delta_W(W) \le 3$, each white neighbor $u$ of $v$ has at most three white neighbors in $G_D$ and  $u \in B_3 \cup B_2 \cup B_1 \cup R$ in $G_{D\cup A}$. As follows, $\w(A) \ge 10+5(16-9)=45 >44\, |A|$ that is a contradiction to our assumption. 

\item \textbf{Case 4.} $\Delta_W(W) = 3$ and $\Delta_W(B) \le 4$\\
First remark that, by the condition $\Delta_W(B) \le 4$, if a blue vertex loses $\ell$ white neighbors in a step, then $g(G_D)$ decreases by at least $\ell$. Select a white vertex $v$ with $d_W(v) =3$ and let $A=\{v\}$. In $G_{D\cup A}$, vertex $v$ becomes red and its three white neighbors become blue or red having at most $2$ white neighbors. By Observation~\ref{obs:1} $(iv)$, each of $v$ and its white neighbors has at least one blue neighbor in $G_D$. Thus, we get $\w(A) \ge 16+ 3(16-8) + 4\cdot 1= 44\, |A|$ which is a contradiction. 

\item \textbf{Case 5.} $\Delta_W(W) \le 2$ and $\Delta_W(B) = 4$\\
 Here, we choose a vertex $v$ from $B_4$ and define $A=\{v\}$. First, observe that $v$ belongs to $ B_4$ in $G_D$ and to $R$ in $G_{D \cup A}$. In $G_D$, $v$ has four white neighbors which become blue or red and belong to $ B_2 \cup B_1 \cup R$ in $G_{D\cup A}$. By Observation~\ref{obs:1} $(iv)$ and by $\Delta_W(W) \le 2$, each white neighbor has at least one blue neighbor that is different from $v$.  Therefore, $\w(A) \ge 10+ 4(16-8)+4\cdot 1=46>44\, |A|$ that is a contradiction again. This finishes the proof of the claim.  \smallqed
\end{itemize}

In the continuation, we suppose that $\Delta_W(W) \le 2$ and $\Delta_W(B) \le 3$ hold in the counterexample $G_D$ and, therefore, the graph $G_D[W]$, which is induced by the white vertices of $G_D$, consists of components which are paths and cycles. We prove some further properties for $G_D$.

\paragraph{Claim H.}  In $G_D[W]$, each component is a path $P_1$, $P_2$ or a cycle $C_4$ or $C_7$.

\Proof Assume that there is a path component $P_j\colon v_1\dots v_j$ of order $j \ge 3$ in $G_D[W]$. We set $A=\{v_2\}$ and observe that both $v_1$ and $v_2$ become red and $v_3$ belongs to $B_1 \cup R$ in $G_{D\cup A}$. This contributes to $\w(A)$ by at least $2\cdot 16 + (16- 7)$. By Observation~\ref{obs:1} $(iv)$, $v_1$, $v_2$, and $v_3$, respectively, have at least $3$, $2$, $2$ blue neighbors in $G_D$. The decrease in their white-degrees contributes to $\w(A)$ by at least $7 \cdot 1$. Then, we get $\w(A) \ge 32+9+7=48 >44\, |A|$, a contradiction.

Now, assume that a cycle $C_{3k}\colon v_1 \dots v_{3k}v_1$ exists in $G_D[W]$ and set $A=\{v_3, v_6, \dots , v_{3k}\}$. In $G_{D \cup A}$, all the $3k$ vertices of the cycle are recolored  red and, by Observation~\ref{obs:1} $(iv)$, the sum of the white-degrees of the blue vertices decreases by at least $2\cdot 3k$. Consequently, we get the contradiction $w(A) \ge 16\cdot 3k+ 6k = 54k > 44\, |A|$. A similar argumentation can be given if the cycle is $C_{3k+2}\colon v_1 \dots v_{3k+2}v_1$, where $k \ge 1$, and $A=\{v_3, v_6, \dots , v_{3k}, v_{3k+2}\}$. Here, $|A|=k+1$ and we get $\w(A) \ge 16\cdot (3k+2) + 2\cdot (3k+2) = 54k +36 > 44k +44 =44\, |A|$ that is a contradiction. For the case when the cycle is of order $3k+1$, we suppose $k \ge 3$ and obtain a contradiction as follows. Let $C_{3k+1}\colon v_1 \dots v_{3k+1}v_1$ and let $A$ be the $(k+1)$-element dominating set $\{v_3, v_6, \dots , v_{3k}, v_{3k+1}\}$. We get $\w(A) \ge 16\cdot (3k+1) + 2\cdot (3k+1) = 54k +18 > 44k+44 =44\, |A|$ since $k \ge 3$ is supposed. This finishes the proof of Claim H. \smallqed

\paragraph{Claim I.}  No vertex from $B_3$ is adjacent to any vertices from $W_0$ in $G_D$.

\Proof Assume for a contradiction that a vertex $v\in B_3$ has a neighbor $u_0$ from $W_0$. Let $A=\{v\}$ and denote by $u_1$ and $u_2$ the further two white neighbors of $v$. In $G_{D\cup A}$, $v,u_0 \in R$ and $u_1,u_2 \in B_2 \cup B_1 \cup R$. This change contributes to $\w(A)$ by at least $9+16+2(16-8)=41$. By Observation~\ref{obs:1} $(iv)$, the neighbors $u_0$, $u_1$ and $u_2$ have, respectively, at least $3$, $1$, $1$ blue neighbors which are different from $v$. Therefore, $\w(A) \ge 41+ 5 \cdot 1 = 46 > 44\, |A|$ should be true but this contradicts our assumption on $G_D$. \smallqed
\medskip
 
 As follows, the vertices from $W_0$ may be adjacent only to some vertices from $B_2 \cup B_1$. We call a vertex from $B_2$ \emph{special}, if it is adjacent to a vertex from $W_0$. 

\paragraph{Claim J.}  No special vertex is adjacent to two vertices from $W_0$. 

\Proof Suppose that a vertex $v \in B_2$ is adjacent to two vertices, say $u_1$ and $u_2$ from $W_0$. We set $A=\{v\}$ and observe that all the three vertices $v$, $u_1$ and $u_2$ are red in $G_{D\cup A}$. By Observation~\ref{obs:1} $(iv)$, each of $u_1$ and $u_2$ has at least three blue neighbors different from $v$. This yields $\w(A) \ge 8+2\cdot 16+ 6 \cdot 1= 46>44\, |A|$ that contradicts our assumption on $G_D$. \smallqed 
  
\paragraph{Claim K.}  No special vertex is adjacent to a vertex from a $C_4$ or $C_7$. 

\Proof If a special vertex $v$ is adjacent to  a vertex $u_0$ from $W_0$ and to a vertex $u_1$ from a $4$-cycle component $C_4\colon u_1u_2u_3u_4u_1$ of $G_D[W]$, then we set $A=\{v, u_3\}$ and observe that $v$, $u_0$, $u_1$, $u_2$, $u_3$ and $u_4$ turn red in $G_{D\cup A}$.  In $G_D$, the vertices $u_0$, $u_1$, $u_2$, $u_3$ and $u_4$, respectively, have at least $3$, $1$, $2$, $2$, $2$  neighbors from $(B_3 \cup B_2 \cup B_1)\setminus \{v\}$. Thus, $\w(A) \ge 8+5 \cdot 16 + 10 \cdot 1 =98 > 44\, |A|$, a contradiction. Similarly, if we suppose that a special vertex $v$ is adjacent to $u_0$ from $W_0$ and to a vertex $u_1$ from the $7$-cycle $u_1\dots u_7u_1$, we set $A=\{v, u_3, u_6\}$ and conclude that $\w(A) \ge 8+ 8 \cdot 16 + 16 \cdot 1= 152 > 44\, |A|$ that contradicts our assumption on $G_D$. \smallqed

\paragraph{Claim L.}  If $v_1$ and $v_2$ are two adjacent vertices from $W_1$, then at most one of them may have a special blue neighbor.

\Proof  Assume to the contrary that  $v_1u_1, v_2u_2 \in E(G)$ such that $u_1$, and $u_2$ are special vertices in $G_D$, and let $x_1$ and $x_2$ be the further white neighbors of $u_1$ and $u_2$. Hence, we have $v_1, v_2 \in W_1$, $u_1, u_2 \in B_2$, and $x_1, x_2 \in W_0$ in $G_D$. Consider the set $A=\{u_1, u_2\}$ and observe that all the six vertices $v_1$, $v_2$, $u_1$, $u_2$, $x_1$, $x_2$ become red in $G_{D \cup A}$. For $i=1,2$, by Claim I and  Observation~\ref{obs:1} $(iv)$, the vertex $x_i$ has at least three neighbors from $(B_2 \cup B_1)\setminus \{v\}$ and $v_i$ has at least two neighbors from $(B_3 \cup B_2 \cup B_1)\setminus \{v\}$. This implies the contradiction $\w(A) \ge 2\cdot 8 + 4\cdot 16 + 10 \cdot 1 = 90 >44\, |A|$.  \smallqed

\paragraph{Discharging.} Applying Claims G--L, we now perform a discharging and prove that $G_D$ satisfies Property 1. We assign charges to the (non-red) vertices of $G_D$ so that every white vertex gets $16$, and every vertex from $B_3$, $B_2$, and $B_1$ gets $9$, $8$, and $7$, respectively. We remark that the sum of these charges equals $g(G_D)$. Then, every blue vertex, except the special ones, distributes its charge equally among the white neighbors as follows:
\begin{itemize}
\item Every vertex from $B_3$ gives $3$ to each white neighbor.
\item Every non-special vertex from $B_2$ gives $4$ to each white neighbor.
\item Every special vertex gives $7$ to its neighbor from $W_0$, and gives $1$ to the other neighbor.
\item Every vertex from $B_1$ gives $7$ to its neighbor.
\end{itemize}
  
After the discharging, every vertex from a $P_1$-component of $G_D[W]$ has a charge of at least $16+4\cdot 7=44$. By Claim L, every $P_2$-component has at least three non-special blue neighbors and, therefore, its charge is at least $2\cdot 16 + 3 \cdot 1 + 3\cdot 3=44$. By Claim K, every $C_4$-component has at least $4\cdot 16 + 8 \cdot 3 = 88$ and every $C_7$-component has at least $7\cdot 16+ 14\cdot 3=154$ as a charge. Let the number of $P_1$-, $P_2$-, $C_4$-, and $C_7$-components of $G[W]$  be denoted by  $p_1$, $p_2$, $c_4$, and $c_7$, respectively, and let $A$ be a minimum dominating set in $G[W]$. Then,
$$|A| = p_1 +p_2+ 2 \, c_4  + 3 \,  c_7. $$
As $D\cup A$ is a dominating set in the graph $G$, we have $g(G_{D\cup A})=0$. Thus, $\w(A)=g(G_D)$, and the discharging proves the following lower bound:
\begin{equation*}
\begin{split}
\w(A) = g(G_D) &\ge 44 \, p_1 + 44 \, p_2+ 88 \, c_4 +154 \,c_7 \\
& \ge 44\, (p_1 +p_2+ 2 \, c_4 +3 \,  c_7) = 44 \,|A|.
\end{split}
\end{equation*}
As it contradicts our assumption on $G_D$, we infer that every graph $G$ with minimum degree $4$ and every $D \subseteq V(G)$ with $g(G_D) >0$ satisfy Property 2. 

To prove Theorem~\ref{thm:delta-4}, we observe that $g(G_\emptyset)= 16 \,n$ and, by Property 2, there exists a set $A_1$  such that $g(G_{A_1}) \le g(G_\emptyset) - 44\, |A_1|$.  As $G_{A_1}$ also satisfies Property 2, we may continue the process if $g(G_{A_1}) >0$, and at the end we obtain a dominating set $D= A_1 \cup \cdots \cup A_j$ such that 
$$g(G_D)=0 \le g(G_\emptyset) - 44\, |D| = 16\, n -44\, |D |.$$
Consequently,
$$\gamma(G) \le |D| \le \frac{16\, n}{44} = \frac{4}{11}\,n$$ 
holds for every graph $G$ of minimum degree $4$.
\end{proof}

\section{Concluding remarks}
Theorem~\ref{thm:delta-5} shows that $\gamma(G) \le n/3$ holds for every graph with minimum degree at least $5$. However, I do not believe that this upper bound is tight over the class of graphs with $\delta(G)\ge 5$. Examples with $\gamma/n >1/4$ can possibly be found among larger graphs via computer search or large constructions, but it seems that $\delta(G) \ge 5$ and $n \le 12$ together implies $\gamma(G)\le n/4$ that is quite far from the proved $n/3$-upper bound.

Unfortunately, Theorem~\ref{thm:delta-4} does not seem sharp either. However, here we have $4$-regular examples where the quotient $\gamma/n$ equals $1/3$ that is relatively close to the proved upper bound $4/11$. The smallest such $4$-regular graph is $G=K_6-M$ that is obtained from the complete graph $K_6$ by the deletion of a perfect matching. Then, we have  $\gamma(G)=2=n/3$. One may guess that this is the sharp upper bound for graphs of minimum degree $4$ or, at least, it is true under the following stronger condition:
\begin{conj}
	There exists a constant $n_0$ such that for every connected $4$-regular graph $G$ of order $n >n_0$, we have $\gamma(G) \le \frac{n}{3}$.
\end{conj}

	%


\begin{thebibliography}{}

\bibitem{alon-1990}
N.~Alon,
Transversal numbers of uniform hypergraphs,
Graphs Combin. 6 (1990) 1--4.

\bibitem{arnautov-1974}
V.~I.~Arnautov,
Estimation of the exterior stability number of a graph by means of the minimal degree of the vertices (Russian),
Prikl. Mat. i Programmirovanie 11 (1974) 3--8.

\bibitem{BCDS}
Cs.~Bir\'o, \'E.~Czabarka, P.~Dankelmann, L.~Sz\'ekely,
Remarks on the domination number of graphs,
Bull. Inst. Combin. Appl. 64 (2012) 73--83.

\bibitem{blank-1973}
M.~M.~Blank,
An estimate of the external stability number of a graph without suspended vertices (in Russian), Prikl. Mat. i Programmirovanie 10 (1973) 3--11.

\bibitem{bresar-2014}
B.~Bre{\v{s}}ar, T.~Gologranc, M.~Milani\v c, D.~F.~Rall, R.~Rizzi,
Dominating sequences in graphs,
Discrete Math. 336 (2014) 22--36.

\bibitem{bresar-2010}
B.~Bre{\v{s}}ar, S.~Klav{\v{z}}ar, D.~F.~Rall,
Domination game and an imagination strategy,
SIAM J. Discrete Math. 24 (2010) 979--991.

 \bibitem{BK-2016}
Cs.~Bujt\'as, S.~Klav\v zar,
Improved upper bounds on the domination number of graphs with minimum degree at least five,
Graphs Combin. 32 (2016) 511--519.



\bibitem{CW-2013}
C.~N.~Campos, Y.~Wakabayashi,
On dominating sets of maximal outerplanar graphs,
Discrete Appl. Math. 161 (2013) 330--335.

\bibitem{CSSF}
W.~E.~Clark, B.~Shekhtman, S.~Suen, D.~C.~Fisher,
Upper bounds for the domination number of a graph,
Congr. Numer. 132 (1998) 99--123.



\bibitem{DJLMR}
S.~Dantas, F.~Joos, C.~L\" owenstein, D.~S.~Machado, D.~Rautenbach,
Domination and total domination in cubic graphs of large girth,
Discrete Appl. Math. 174 (2014) 128--132.
 

\bibitem{haynes-1998}
T.~W.~Haynes, S.~T.~Hedetniemi, P.~J.~Slater,
{\em Fundamentals of Domination in Graphs},
Marcel Dekker, New York, 1998.

\bibitem{HSY}
M.~A.~Henning, I.~Schiermeyer, A.~Yeo,
A new bound on the domination number of graphs
with minimum degree two,
Electron. J. Combin. 18 (2011) \#P12.

\bibitem{KP-2010}
E.~L.~King, M.~J.~Pelsmajer,
Dominating sets in plane triangulations,
Discrete Math. 310 (2010), 2221--2230.

\bibitem{KS-2005}
A.~V.~Kostochka, B.~Y.~Stodolsky,
On domination in connected cubic graphs, 
Discrete Math. 304 (2005), 45--50.


\bibitem{kostochka-2009}
A.~V.~Kostochka, B.~Y.~Stodolsky,
An upper bound on the domination number of $n$-vertex connected cubic graphs,
Discrete Math. 309 (2009) 1142--1162.

\bibitem{kral-2012}
D.~Kr\'al, P.~\v Skoda, J.~Volec,
Domination number of cubic graphs with large girth,
J. Graph Theory 69 (2012) 131--142.

\bibitem{lo-2008}
C.~L{\"o}wenstein, D.~Rautenbach,
Domination in graphs of minimum degree at least two and large girth,
Graphs Combin. 24 (2008) 37--46.

\bibitem{mccuaig-1989}
W.~McCuaig, B.~Shepherd,
Domination in graphs with minimum degree two,
J. Graph Theory 13 (1989) 749--762.

\bibitem{ore-1962}
O.~Ore,
{\it Theory of Graphs},
American Mathematical Society, Providence, R.I., 1962.

\bibitem{payan-1975}
C.~Payan,
Sur le nombre d'absorption d'un graphe simple,
Cahiers Centre \'Etudes Recherche Op\'er. 17 (1975) 307--317.

\bibitem{reed-1996}
B.~Reed,
Paths, stars and the number three,
Combin. Probab. Comput. 5 (1996) 277--295.

\bibitem{sohn-2009}
M.~Y.~Sohn, Y.~Xudong,
Domination in graphs of minimum degree four,
J. Korean Math. Soc. 46 (2009) 759--773.

\bibitem{XSC}
H.-M.~Xing, L.~Sun, X.-G.~Chen,
Domination in graphs of minimum degree five,
Graphs Combin. 22 (2006) 127--143.

\end{thebibliography}
\end{document}